\newtheorem{theorem}{Theorem}
\newtheorem{lemma}{Lemma}
\theoremstyle{definition}
\newtheorem{definition}{Definition}
\renewcommand{\vec}{\mathbf}
\newcommand{\set}[1]{\left\{#1\right\}}
\newcommand{\nb}{\text{nb}}
\newcommand{\abs}[1]{\left|#1\right|}
\renewcommand{\deg}{\text{deg}}
\newcommand{\eps}{\varepsilon}	
\newcommand{\N}{\mathds{N}}
\newcommand{\UNN}{\mathrm{UNN}}
\title{On Unique Neighborhoods in Bipartite and Expander Graphs}
\author{Stefan Rass\thanks{LIT Secure and Correct Systems Lab, Johannes Kepler University Linz, email: \texttt{stefan.rass@jku.at}}}
\begin{document}
	\maketitle
	
	\begin{abstract}
		An undirected graph is said to have \emph{unique neighborhoods} if any two distinct nodes have also distinct sets of neighbors. In this way, the connections of a node to other nodes can characterize a node like an ``identity'', irrespectively of how nodes are named, as long as two nodes are distinguishable. We study the uniqueness of neighborhoods in (random) bipartite graphs, and expander graphs. 
	\end{abstract}

\section{Introduction}

This note is an extension to \cite{rassTellMeWho2024} on \acp{UNN}, where we study some two additional classes of graphs, namely bipartite and expander graphs for having (or not having) the \ac{UNN} property. The main results are the facts that random bipartite graphs with edges only between two subsets $V,W$ almost surely exhibit unique neighborhoods, if $V$ and $W$ are asymptotically of the same cardinalities. For expander graphs, we show that unique neighborhoods are logically independent of the graph to be or not to be an expander; more precisely, the \ac{UNN} property neither follows from or is refutable from any value of the graph's Cheeger constant.




\begin{definition}[\acl{UNN} \cite{rassTellMeWho2024}]\label{def:unn}
	A graph $G=(V,E)$ is a \ac{UNN}, if no neighborhood is a subset of another node's neighborhood. That is, for every $v$, there is some $u\in\nb(v)$ with $u\notin\nb(w)$ for all $w\neq v$, or equivalently, there is a nonempty symmetric difference $\nb(u)\triangle\nb(v)\neq\emptyset$ for all distinct $u,v$.
\end{definition}

To ease our notation, we let the set of all \acp{UNN} be denoted by $\UNN$, meaning that $G\in\UNN$ is a shorthand for unique neighborhoods in $G$, and $G\notin\UNN$ implying that at least two nodes in $G$ have the same neighborhood. More formally, the definition of a \ac{UNN} is the following:

\section{\acp{UNN} in Random Bipartite Graphs}

We start from \cite[equation (2)]{rassTellMeWho2024}, 
\[
B_{ij} := (\vec A\cdot \vec 1_{N\times N}-\vec A^2)_{ij}=\sum_{k=1}^N a_{ik}\cdot 1 - \sum_{k=1}^N a_{ik}a_{kj} = \sum_{k=1}^N a_{ik}(1-a_{kj}).
\]
where we put $N=n+m$ as the total number of nodes in the graph, and let $\vec A$ be its (usual) adjacency matrix\footnote{for bipartite graphs, the bi-adjacency matrix of $n\times m$ shape is an occasional alternative, but explicitly not used here}.

Therein, $a_{ik}=1$ if $i\in V_1$ and $k\in V_2$ become connected, which happens with a constant probability $0<p<1$. Thus, $\Pr(a_{ik}=1)=\Pr(i\in V_1\land k\in V_2\land S)$, if $S$ is the event of this edge to appear, with $\Pr(S)=p$. The selection of $i$ and $k$ is made independently and uniformly from $V_1, V_2$, and likewise is $S$ independent of both, and has the same probability for all possible edges. From this, we get 
\begin{align*}
	\Pr(a_{ik}=1)&=\Pr(i\in V_1)\cdot\Pr(k\in V_2)\cdot p\\
	&=\frac n{n+m}\cdot \frac{m}{n+m}\cdot p
\end{align*}
so that 
\begin{align*}
	\Pr(a_{ik}=1\land a_{kj}=0)&=\left( \frac n{n+m}\cdot \frac{m}{n+m}\cdot p\right) \cdot \left( 1-\frac n{n+m}\cdot \frac{m}{n+m}\cdot p\right)\\ &=:q
\end{align*}
Now, we have all terms inside the definition of $B_{ij}$ be values in $\set{0,1}$
considering that $\Pr(B_{ij}=0)=(1-q)^N$ by a binomially distributed choice of zero from $B_{ij}\in\set{0,1,\ldots,N}$. It remains to show that $q$ is bounded between 0 and 1, and this will depend on the growth of $\abs{V_1}=n$ relative to $\abs{V_2}=m$. For example, if we let $N\to\infty$, while keeping either $n$ or $m$ bounded, unique neighborhoods are impossible by the pigeon hole principle. However, if we assume $\abs{V_1}\in\Theta(\abs{V_2})$, then there are two constants $a,b>0$ such that for sufficiently large $m,n$, we have $\abs{V_1}\leq a\cdot\abs{V_2}$ and $\abs{V_2}\leq b\cdot \abs{V_1}$, which puts $q$ strictly inside the unit interval $(0,1)$. From this point onwards, the proof carries over verbatim from  \cite[equation (3)]{rassTellMeWho2024}, giving the conclusion that the bipartite graph will grow into a \ac{UNN}. This proves the following result:

\begin{theorem}
Let a random bipartite graph $G_{n,m}$ be given with $n=\abs{V_1}, m=\abs{V_2}$ and $n\in\Theta(m)$. Let the edges of $G$ appear with a probability $p$ that does not depend on $n$ or $m$. Then, as $n+m\to\infty$, $G_{m,n}$ will almost surely be a \ac{UNN}.
\end{theorem}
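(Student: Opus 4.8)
The plan is to run a first--moment argument on the entries of $\vec B$, exactly as in \cite[eqs.\ (2)--(3)]{rassTellMeWho2024}; the only new ingredient is a uniform lower bound on the parameter $q$ that uses the hypothesis $n\in\Theta(m)$. Recall that $B_{ij}=\sum_k a_{ik}(1-a_{kj})$ counts the neighbours of $i$ that are not neighbours of $j$, i.e.\ $B_{ij}=\abs{\nb(i)\setminus\nb(j)}$, so $B_{ij}=0$ iff $\nb(i)\subseteq\nb(j)$; hence $G_{n,m}\in\UNN$ exactly when $B_{ij}\geq 1$ for all ordered pairs $i\neq j$. By the excerpt, for each fixed pair one has $\Pr(B_{ij}=0)=(1-q)^{N}$ with $q=x(1-x)$, $x=\tfrac{nm}{N^{2}}p$ and $N=n+m$, so a union bound over the fewer than $N^{2}$ ordered pairs gives
\[
\Pr\bigl(G_{n,m}\notin\UNN\bigr)\ \leq\ \sum_{i\neq j}\Pr(B_{ij}=0)\ \leq\ N^{2}(1-q)^{N}.
\]
It therefore suffices to show that $q$ is bounded below by a constant that does not depend on $n$ or $m$.

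This is exactly where $n\in\Theta(m)$ enters. Choosing constants $a,b>0$ with $n\leq a\,m$ and $m\leq b\,n$ for all large $n,m$, we get $\tfrac{n}{N}=\tfrac{n}{n+m}\geq\tfrac{1}{1+b}$ and $\tfrac{m}{N}\geq\tfrac{1}{1+a}$, hence $x=\tfrac{n}{N}\cdot\tfrac{m}{N}\cdot p\geq\delta p$ with $\delta:=\tfrac{1}{(1+a)(1+b)}>0$; on the other hand $\tfrac{n}{N}+\tfrac{m}{N}=1$ and AM--GM give $\tfrac{n}{N}\cdot\tfrac{m}{N}\leq\tfrac14$, so $x\leq p/4<1$. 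As $x$ ranges over the fixed compact interval $[\delta p,\,p/4]\subset(0,1)$, the quantity $q=x(1-x)$ stays bounded away from $0$ and from $1$, say $q\geq\eta>0$ for a constant $\eta$ depending only on $a,b,p$. Substituting into the bound above, $\Pr(G_{n,m}\notin\UNN)\leq N^{2}(1-\eta)^{N}\to 0$ as $N\to\infty$, since geometric decay dominates the polynomial factor; this yields $G_{n,m}\in\UNN$ almost surely.

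The main (indeed only) obstacle is this uniform lower bound on $q$, and it is genuinely necessary: were $\min(n,m)$ to stay bounded while $N\to\infty$, then $x=\tfrac{nm}{N^{2}}p\to 0$, so $q\to 0$, the factor $(1-q)^{N}$ need not vanish, and in fact the pigeonhole principle rules out unique neighbourhoods altogether, as noted in the excerpt. A secondary technical point one should flag is the identity $\Pr(B_{ij}=0)=(1-q)^{N}$: it treats the $N$ summands $a_{ik}(1-a_{kj})$, $k=1,\dots,N$, as independent $\BerD(q)$ variables, whereas the terms with $k\in\set{i,j}$ have a slightly different law; discarding those two terms replaces $N$ by $N-2$ in the exponent and does not affect the conclusion.
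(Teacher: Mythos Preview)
Your proposal is correct and follows essentially the same route as the paper: compute $q=x(1-x)$ with $x=\frac{nm}{N^{2}}p$, use $n\in\Theta(m)$ to pin $q$ away from $0$, and then invoke the first-moment/union-bound step of \cite[eq.~(3)]{rassTellMeWho2024}. If anything, you are more explicit than the paper---you write out the union bound $N^{2}(1-\eta)^{N}\to 0$ that the paper only cites, you quantify the lower bound on $x$ via $\delta=\tfrac{1}{(1+a)(1+b)}$ rather than merely asserting $q\in(0,1)$, and you flag the (harmless) slack in treating the $k\in\{i,j\}$ terms as i.i.d.\ with the rest.
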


\section{\acp{UNN} in Expander Graphs}
We will show the logical independence of unique neighborhoods from the value of the graph's Cheeger constant. To this end, let us briefly recall the respective definitions.

Let $G=(V,E)$ be a graph with $n=\abs{V}$ vertices. For a subset $S\subseteq V$, write $\partial S=\set{\{u,v\}: u\in S, v\notin S}$ for the set of edges that connect a node inside $S$ with a node outside $S$. The Cheeger constant $h(G)$ is defined as $h(G)=\min_{S\subset V: \abs{S}\leq\abs{V}/2}\frac{\abs{\partial S}}{\abs{S}}$. We call $G$ an \emph{$(n,\eps,d)$-expander}\footnote{We remark that the definition of expander families is not entirely consistent throughout the literature, since other authors denote the relevant parameters in different order or form}, if all nodes in $G$ have a degree $\leq d$, and the graph has a Cheeger constant $h(G)\geq\eps$. If $\lambda_2$ is the second-largest eigenvalue of the normalized Laplacian matrix of the graph, which is $\vec L=\vec I-\vec D^{-1/2}\cdot \vec A\cdot \vec D^{1/2}$, where $\vec D=\text{diag}(\deg(v_1),\ldots,\deg(v_n))$ is the diagonal matrix with node degrees, and $\vec A$ is the graph's adjacency matrix. The celebrated Cheeger-inequality states that $\lambda_2\leq 2 h(G)\leq\sqrt{2\lambda_2}$. The usual interest is in infinite sets of expander graphs with constant $d$ and $\eps$, and we denote such a set by $\mathcal{G}(N,\eps,d)$, including graphs of all sizes $n\in N\subseteq\N$, but all with the same $d$ and $\eps$, and with $\abs{N}=\infty$.

Specifically, let $\eps$ be arbitrary but fixed, then we will construct example graphs for the following four cases. For each case, we let $n>1$ be arbitrary, implying that there is an infinitude of graphs with the given properties.

\begin{table}
	\caption{Logical independence of \acp{UNN} from the size of the graph's Cheeger constant}\label{tbl:cheeger-unn-independence}
	\begin{tabular}{|c||p{4cm}|p{4cm}|}
		\hline 
		& Cheeger constant $\geq\varepsilon$ & Cheeger constant $<\varepsilon$\tabularnewline
		\hline 
		\hline 
		\ac{UNN} & complete graph; Lemma \ref{lem:large-cheeger-unn} & circle graph; Lemma \ref{lem:small-cheeger-unn}\tabularnewline
		\hline 
		not a \ac{UNN} & complete bipartite graph; Lemma \ref{lem:no-unn-large-cheeger} & any graph $G(V,\emptyset)$ with $V\neq\emptyset$; see also Lemma \ref{lem:no-unn-small-cheeger}\tabularnewline
		\hline 
	\end{tabular}
	
\end{table}

\begin{lemma}\label{lem:large-cheeger-unn}
For every $\eps>0$ there is some $n>1$ and a graph $G_n\in\UNN$ of size $n$ that has a Cheeger constant $h(G_n)\geq \eps$.
\end{lemma}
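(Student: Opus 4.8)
The plan is to exhibit the complete graph $G_n = K_n$ as the witness, exactly as indicated in Table~\ref{tbl:cheeger-unn-independence}, and to verify the two required properties directly. First I would check that $K_n \in \UNN$: every vertex $v$ satisfies $\nb(v) = V \setminus \{v\}$, so for any two distinct vertices $u,v$ the set $\nb(u) = V\setminus\{u\}$ cannot be contained in $\nb(v) = V\setminus\{v\}$, since that would require $v \notin V\setminus\{u\}$, i.e.\ $v = u$. Hence no neighborhood is a subset of another node's neighborhood, which is the defining property of Definition~\ref{def:unn}, and this already holds for every $n \geq 2$.

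Next I would compute $h(K_n)$. For any $S \subset V$ with $1 \leq \abs{S} \leq n/2$, completeness gives $\abs{\partial S} = \abs{S}\cdot(n - \abs{S})$, so $\abs{\partial S}/\abs{S} = n - \abs{S}$; this is decreasing in $\abs{S}$, hence minimized at the largest admissible value $\abs{S} = \lfloor n/2\rfloor$. Therefore $h(K_n) = n - \lfloor n/2\rfloor = \lceil n/2\rceil \geq n/2$. Given an arbitrary $\eps > 0$, I would then set $n = \max\{2,\lceil 2\eps\rceil\}$, so that $n > 1$ and $h(G_n) = \lceil n/2\rceil \geq n/2 \geq \eps$, which finishes the argument.

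I do not expect a genuine obstacle here; the only points deserving a line of care are the evaluation of the minimum in the definition of $h$ (one must note that the objective $n - \abs{S}$ is minimized at the \emph{largest} feasible cut, not the smallest) and the observation that the statement asks only for a large Cheeger constant, not for membership in an expander family $\mathcal{G}(N,\eps,d)$ with a fixed degree bound $d$ — so the fact that $K_n$ has unbounded degree $n-1$ is immaterial for this lemma.
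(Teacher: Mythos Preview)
Your proposal is correct and follows essentially the same route as the paper: take $K_n$, verify the \ac{UNN} property (you argue it directly from $\nb(v)=V\setminus\{v\}$ where the paper cites \cite[Lemma 5]{rassTellMeWho2024}), compute $\abs{\partial S}/\abs{S}=n-\abs{S}$ so that $h(K_n)=\lceil n/2\rceil$, and then choose $n$ large enough. Your handling of the floor/ceiling and the choice $n=\max\{2,\lceil 2\eps\rceil\}$ are slightly more careful than the paper's informal ``$h(G)=n/2$, take $n>2\eps$'', but the argument is the same.
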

\begin{proof}
Simply consider the complete graph $K_n=(V,V\times V)$ with $\abs{V}=n$, which is a \ac{UNN} \cite[Lemma 5]{rassTellMeWho2024}.

For $A\subseteq V$, with size $\abs{A}$, we have $\abs{\partial{A}}=\abs{A}\cdot (n-\abs{A})$ many edges outgoing from $A$, giving the $\frac{\abs{A}\cdot(n-\abs{A})}{\abs{A}}=n-\abs{A}$, which is at its minimum for $\abs{A}=n/2$, giving the Cheeger constant $h(G)=n/2$. The result follows by making $n>2\eps$.
\end{proof}

\begin{lemma}\label{lem:small-cheeger-unn}
For every $\eps>0$ there is some $n\in\N$ and a graph $G_n\in\UNN$ of size $n$ that has a Cheeger constant $h(G_n)<\eps$.
\end{lemma}
\begin{proof}
The sought graph is the circle graph $C_n$ with $n$ nodes. This graph is a \ac{UNN} \cite[Lemma 5]{rassTellMeWho2024}. Its Cheeger constant is easily to find by considering the set $A=\set{1,2,\ldots,\lfloor n/2\rfloor}$, having outgoing edges $\set{\{\lfloor n/2\rfloor, \lfloor n/2\rfloor+1 \},\{n,1\}}$, which gives $\frac{\partial A}{\abs{A}}=\frac 2{\lfloor n/2\rfloor}\to 0$ as $n\to\infty$. Thus, for sufficiently large $n$, the Cheeger constant will become $<\eps$.
\end{proof}

\begin{lemma}\label{lem:no-unn-large-cheeger}
For every $\eps>0$ there is some $n>1$ and a graph $G_n\notin\UNN$ of size $n$ with a Cheeger constant $h(G_n)\geq\eps$.
\end{lemma}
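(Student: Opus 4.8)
The plan is to take the complete bipartite graph $K_{m,m}$ with both colour classes of size $m$ (so $n=2m$), exactly as indicated in Table~\ref{tbl:cheeger-unn-independence}. Two claims must be verified: that $K_{m,m}\notin\UNN$ whenever $m\ge 2$, and that $h(K_{m,m})$ grows without bound in $m$, so that it exceeds any prescribed $\eps$ once $m$ is large enough.

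The first claim is immediate: writing $L,R$ for the two sides, any two distinct vertices $u,v\in L$ satisfy $\nb(u)=R=\nb(v)$, so $\nb(u)\triangle\nb(v)=\emptyset$, and by Definition~\ref{def:unn} the graph is not a \ac{UNN}. For the Cheeger constant, fix $S\subseteq V$ and set $a=\abs{S\cap L}$, $b=\abs{S\cap R}$, so that $\abs{S}=a+b$. The edges leaving $S$ are precisely those from $S\cap L$ to $R\setminus S$ together with those from $S\cap R$ to $L\setminus S$, hence $\abs{\partial S}=a(m-b)+b(m-a)=m(a+b)-2ab$ and
\[
\frac{\abs{\partial S}}{\abs{S}}=m-\frac{2ab}{a+b}.
\]
Since $(a-b)^2\ge 0$ yields $\frac{2ab}{a+b}\le\frac{a+b}{2}$, and the minimum defining $h$ ranges only over sets with $\abs{S}=a+b\le n/2=m$, we obtain $\frac{\abs{\partial S}}{\abs{S}}\ge m-\frac m2=\frac m2$, and therefore $h(K_{m,m})\ge m/2$.

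It then suffices, given $\eps>0$, to pick $m=\max\set{2,\lceil 2\eps\rceil+1}$ and put $G_n:=K_{m,m}$ with $n=2m>1$: this graph is not a \ac{UNN} and has $h(G_n)\ge m/2>\eps$, as required. There is no genuine obstacle in the argument; the only point requiring a moment's care is the minimisation of $\abs{\partial S}/\abs{S}$ over all ways of splitting $S$ between the two sides, which is handled by the elementary bound $2ab\le(a+b)^2/2$ together with the size restriction $\abs{S}\le n/2$.
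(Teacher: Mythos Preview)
Your proof is correct and uses the same example graph $K_{m,m}$ as the paper. The one difference is in how the lower bound on the Cheeger constant is obtained: the paper argues spectrally, noting that $K_{m,m}$ is Ramanujan with a diverging spectral gap and then invoking Cheeger's inequality to force $h(K_{m,m})\to\infty$, whereas you bound $\abs{\partial S}/\abs{S}$ directly via the elementary inequality $2ab\le(a+b)^2/2$ to get $h(K_{m,m})\ge m/2$. Your route is more self-contained (no spectral machinery needed) and in fact yields the exact value $h(K_{m,m})=m/2$, since the bound is attained at $a=b=m/2$; the paper's argument is shorter but presupposes the reader knows the spectrum of $K_{m,m}$ and the Cheeger inequality.
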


\begin{proof}
The sought graph is the complete bipartite graph $K_{n/2,n/2}$ for even $n$. This is a well-known example of a Ramanujan-graph, whose spectral gap is largest possible; specifically, with the second-smallest eigenvalue $\lambda_2$ of the graph's Laplacian, we have a diverging spectral gap, if we let $d=n/2\to\infty$. The divergence of the Cheeger constant $h(K_{n/2,n/2})\to\infty$ then follows from Cheeger's inequality.
\end{proof}

\begin{lemma}\label{lem:no-unn-small-cheeger}
For every $\eps>0$ there is some $n>1$ and a graph $G_n\notin\UNN$ of size $n$ with a Cheeger constant $h(G_n)<\eps$.
\end{lemma}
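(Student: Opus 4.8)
The plan is to record the simplest possible witness, the edgeless graph, and observe that it simultaneously fails the \ac{UNN} property and has vanishing Cheeger constant, so that it serves for \emph{every} $\eps>0$ at once. Concretely, fix any $n>1$ and let $G_n=(V,\emptyset)$ with $\abs{V}=n$; the bulk of the argument is then two one-line checks.

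First I would verify $G_n\notin\UNN$. Since $E=\emptyset$ we have $\nb(v)=\emptyset$ for every $v\in V$, so for any two distinct $u,v\in V$ the symmetric difference $\nb(u)\triangle\nb(v)=\emptyset$; by Definition \ref{def:unn} this means $G_n$ is not a \ac{UNN}. (Indeed all $n\geq 2$ nodes share the single neighborhood $\emptyset$.) Next I would compute $h(G_n)$: for any $S\subseteq V$ with $1\leq\abs{S}\leq\abs{V}/2$ we have $\partial S=\emptyset$ because the graph has no edges at all, hence $\abs{\partial S}/\abs{S}=0$, and taking the minimum over all such $S$ gives $h(G_n)=0<\eps$. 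Picking e.g.\ $n=2$ (or any $n>1$) then finishes the proof.

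There is essentially no obstacle here; the only point worth a brief remark is that the minimization defining $h(G)$ ranges over nonempty $S$ with $\abs{S}\leq\abs{V}/2$, so it is legitimate to take $S$ a single vertex, and the value $0$ is genuinely attained rather than merely approached as $n\to\infty$ (unlike in Lemma \ref{lem:small-cheeger-unn}). If a connected, and arguably less degenerate, example is preferred, one can instead attach to the circle graph $C_{n-1}$ one extra vertex joined to exactly the two neighbors of a fixed vertex $v$: this creates a ``twin'' pair with identical neighborhoods, so the graph is not a \ac{UNN}, while for an arc $S$ avoiding the twin pair the cut ratio $\abs{\partial S}/\abs{S}$ is still $O(1/n)\to 0$ exactly as in the proof of Lemma \ref{lem:small-cheeger-unn}. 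The edgeless graph is nonetheless the cleanest witness and is the one already listed in Table \ref{tbl:cheeger-unn-independence}, so I would present that one and mention the twin-of-a-cycle variant only as an aside.
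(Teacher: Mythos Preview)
Your proposal is correct and matches the paper's own proof: the edgeless graph $G=(V,\emptyset)$ is taken as the primary witness, with all neighborhoods equal to $\emptyset$ so $G\notin\UNN$ and $h(G)=0<\eps$. The paper likewise offers a connected alternative based on modifying a cycle to create twin vertices; your twin-of-a-cycle variant differs in detail (one added vertex versus two) but follows the same idea and the same $O(1/n)$ cut-ratio estimate.
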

\begin{proof}
	Given $V\neq\emptyset$, the graph $G=(V,\emptyset)$ without edges is trivially not a \ac{UNN}, since all neighborhoods are identically $\emptyset$ and has $h(G)=0<\eps$ as desired.
	
	An alternative argument working with a connected graph is the following: fix any even integer $n\geq 6$ and take the circle graph $C_{n-2}=(V,E)$ with $V=\set{1,\ldots,n}$. Then, add two extra nodes $n+1, n+2$ with edges $\set{1,n+1},\set{1,n+2},\set{2,n+1}$, and $\set{2,n+2}$, and call the resulting graph $G_n$. The neighborhoods of nodes 1 and 2 in $G_n$ are equally $\set{n+1,n+2}$ and vice versa. Hence, $G_n\notin\UNN$. The Cheeger constant is easily upper bounded by choosing $A=\set{3,4,\ldots,n/2+2}\subset V\setminus\set{1,2,n+1,n+2}$. This set has $\abs{A}= n/2$ and $\abs{\partial A}=2$ (the only outgoing edges are to nodes $2$ and $n/2+3$), making $0<h(G)\to 0$ as we let $n$ grow beyond all limits.
\end{proof}

We can summarize the results in Table \ref{tbl:cheeger-unn-independence} as a compact statement:
\begin{theorem}
	Let $G$ be a graph. Whether $G$ is a \ac{UNN} is neither provable nor refutable from any bound on $G$'s Cheeger constant. 
\end{theorem}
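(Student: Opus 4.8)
The plan is to read the informal phrase ``neither provable nor refutable from any bound on $G$'s Cheeger constant'' as the following precise statement: for every real threshold $\eps$ and every direction of inequality, the hypothesis ``$h(G)\geq\eps$'' (resp.\ ``$h(G)\leq\eps$'' or ``$h(G)<\eps$'') neither entails ``$G\in\UNN$'' nor entails ``$G\notin\UNN$''. Equivalently, for each of the two half-lines that a one-sided Cheeger bound carves out of $[0,\infty)$, there is \emph{both} a graph in $\UNN$ and a graph not in $\UNN$ whose Cheeger constant lies in that half-line. This is exactly the pattern displayed in Table~\ref{tbl:cheeger-unn-independence}, so the proof amounts to assembling the four lemmas above into this meta-level statement.

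First I would dispose of the degenerate case: a bound that no graph can satisfy (for instance $h(G)<0$, since $h(G)\geq 0$ always) makes every assertion vacuously ``provable'' and ``refutable'' simultaneously, so we restrict to satisfiable bounds, i.e.\ thresholds $\eps\geq 0$. Then I would split on the direction. For a lower bound $h(G)\geq\eps$ (the left column of Table~\ref{tbl:cheeger-unn-independence}): Lemma~\ref{lem:large-cheeger-unn} provides a graph in $\UNN$ with that bound (the complete graph $K_n$, $n>2\eps$, with $h(K_n)=n/2$), so the bound cannot refute $\UNN$-membership, while Lemma~\ref{lem:no-unn-large-cheeger} provides a graph not in $\UNN$ with the same bound (the complete bipartite graph $K_{n/2,n/2}$ for large $n$, whose Cheeger constant diverges), so the bound cannot prove $\UNN$-membership. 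For an upper bound $h(G)\leq\eps$ or $h(G)<\eps$ (the right column), symmetrically: Lemma~\ref{lem:small-cheeger-unn} gives a graph in $\UNN$ (a large circle graph $C_n$, $h(C_n)\to 0$) and Lemma~\ref{lem:no-unn-small-cheeger} gives a graph not in $\UNN$ (the edgeless graph with $h=0$, or the connected variant constructed there), both with Cheeger constant below any prescribed positive $\eps$. Combining the two directions yields that no one-sided bound on $h(G)$ decides the $\UNN$ property either way, which is the theorem.

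I expect essentially no mathematical obstacle: the content lives entirely in the four lemmas and the theorem only repackages them. The one point needing care is the meta-level bookkeeping — pinning down what ``provable/refutable from a bound'' means and neutralising the vacuous-hypothesis corner case so the statement is not false on a technicality. If one wanted the stronger reading in which ``a bound'' may be a two-sided window $\eps_1\leq h(G)\leq\eps_2$, one would additionally need the $\UNN$ and non-$\UNN$ witness families to realise Cheeger values densely enough to land inside an arbitrary interval; here I would either observe that the complete-graph and circle-graph families already sweep unbounded ranges (so any window reaching $0$ or $\infty$ is covered) or simply fix the one-sided reading as the intended one, since Cheeger bounds are customarily stated in that form. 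In any case, this is a presentational choice, not a gap in the argument.
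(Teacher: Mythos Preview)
Your proposal is correct and matches the paper's approach exactly: the theorem is presented there as nothing more than a summary of Lemmas~\ref{lem:large-cheeger-unn}--\ref{lem:no-unn-small-cheeger} via Table~\ref{tbl:cheeger-unn-independence}, with no additional argument. Your extra care about what ``provable/refutable from a bound'' formally means and the vacuous-hypothesis corner case is actually more than the paper itself supplies.
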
  

While the counterexamples in Table \ref{tbl:cheeger-unn-independence} are all graphs of fixed size and structure, the \ac{UNN}-property is also independent of the graph's membership to an expander-family.


Recall the \emph{2-lift} construction of expander-graphs as devised in \cite{bilu_lifts_2006}: Fix some $d$-regular graph $G=(V,E)$, called the \emph{base graph}. Consider a function $s:E\to\set{-1,+1}$, which is called a \emph{signing}. A \emph{$2$-lift} $\hat G$ of $G$ w.r.t. the signing $s$ is created by associating two vertices $x_1,x_2$ with every vertex $x\in V$, and adding edges in $\hat G$ as follows: for each $\set{x,y}\in E$, add the edges $\{\set{x_1,y_1}, \set{x_2,y_2}\}$ to $\hat{G}$ if $s(x,y)=+1$. Otherwise, if $s(x,y)=-1$, then add the edges $\{\set{x_1,y_2},\set{x_2,y_1}\}$ to $\hat G$.

This operation has been modified later in \cite{marcus_interlacing_2015} to construct $d$-regular expanders of all sizes. 

\begin{lemma}\label{lem:2-lift-unn}
The 2-lift of a \ac{UNN} is again a \ac{UNN}.
\end{lemma}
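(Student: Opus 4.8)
The plan is to exploit the covering-map structure of the $2$-lift. Write $\hat{V}=\set{x_1,x_2:x\in V}$ for the vertex set of $\hat{G}$ and let $\pi:\hat{V}\to V$ be the projection $x_i\mapsto x$. The single observation that drives the argument is that for every $x\in V$ and every $i\in\set{1,2}$, the map $\pi$ restricts to a \emph{bijection} from $\nb_{\hat{G}}(x_i)$ onto $\nb_G(x)$: by the signing rules, each neighbour $y$ of $x$ in $G$ contributes to $\nb_{\hat{G}}(x_i)$ exactly one of its two lifts $y_1,y_2$ (which one being fixed by $s(x,y)$ and by $i$), and the lifts of distinct vertices lie in disjoint fibres of $\pi$. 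In particular $\pi\bigl(\nb_{\hat{G}}(x_i)\bigr)=\nb_G(x)$. So first I would record this fact, checking that it holds uniformly over $i=1,2$ and over both signs rather than running four near-identical case checks.

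Then I would argue by contradiction: suppose $\hat{G}\notin\UNN$, so there are distinct $a,b\in\hat{V}$ with $\nb_{\hat{G}}(a)\subseteq\nb_{\hat{G}}(b)$, and split on whether $a$ and $b$ lie over the same base vertex. If $\pi(a)\neq\pi(b)$, push the inclusion forward by $\pi$ and use $\pi(\nb_{\hat{G}}(\cdot))=\nb_G(\pi(\cdot))$ to get $\nb_G(\pi(a))\subseteq\nb_G(\pi(b))$ for the distinct vertices $\pi(a),\pi(b)$ of $G$, contradicting $G\in\UNN$. If $\pi(a)=\pi(b)=:x$, then $\set{a,b}=\set{x_1,x_2}$; picking any $y\in\nb_G(x)$, the unique element of $\nb_{\hat{G}}(x_1)$ over $y$ is one of $y_1,y_2$ while the unique element of $\nb_{\hat{G}}(x_2)$ over $y$ is the other lift, and neither neighbourhood contains a second vertex over $y$, so since $y_1\neq y_2$ this already shows $\nb_{\hat{G}}(x_1)\not\subseteq\nb_{\hat{G}}(x_2)$ and symmetrically, a contradiction. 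Hence $\hat{G}\in\UNN$.

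The main --- essentially the only --- subtlety is this second case, the two lifts of a single vertex: one must check they genuinely receive different neighbourhoods, and this breaks down exactly when $x$ has no neighbour at all. So the argument needs $\deg_G(x)\geq 1$ for every $x$, i.e.\ that the base graph has no isolated vertex; this is automatic for a $d$-regular base graph with $d\geq 1$, and it cannot be dropped, since a $0$-regular graph that is a UNN must have at most one vertex, whose $2$-lift is a pair of isolated vertices and hence not a UNN. Everything else is a routine ``push forward along the projection'' computation, so modulo stating the bijection fact cleanly the write-up should be short.
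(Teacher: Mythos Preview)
Your argument is correct and takes a different, more conceptual route than the paper. The paper works directly with the block form $\hat{\vec A}=\left(\begin{smallmatrix}\vec A_1 & \vec A_2\\ \vec A_2 & \vec A_1\end{smallmatrix}\right)$ of the lifted adjacency matrix: it assumes two rows coincide, splits on whether their indices lie in the same half or in opposite halves, and in the cross-half case chases a column $k$ with $(\vec A_1)_{ik}=1$ through the block identities until it reaches the sign contradiction $s(i,k)=+1$ and $s(i,k)=-1$. You instead use the covering projection $\pi$ together with the single structural fact that $\pi$ restricts to a bijection $\nb_{\hat G}(x_i)\to\nb_G(x)$; an inclusion of neighbourhoods in $\hat G$ then pushes down to one in $G$ in a single line whenever $\pi(a)\neq\pi(b)$, and the residual fibre case $\{a,b\}=\{x_1,x_2\}$ is handled by the observation that the two lifts of $x$ pick \emph{opposite} preimages over each $y\in\nb_G(x)$. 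Your version is shorter, works verbatim for both the ``no subset'' and the ``distinct neighbourhood'' readings of Definition~\ref{def:unn}, and carries over unchanged to general $k$-lifts. You are also right to isolate the one hidden hypothesis: both the paper's column-chase and your fibre argument need some neighbour $y$ of $x$ to exist, so both silently exclude isolated base vertices; under the ``no subset'' definition this is automatic once $\abs{V}\geq 2$, leaving the single-vertex base graph as the only (trivial) exception to the lemma as stated.
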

\begin{proof}
	Let the base graph be a \ac{UNN}. Borrowing an argument from the proof of \cite[Lemma 3.1]{bilu_lifts_2006}, the adjacency matrix of the 2-lifted graph $\hat G$ can be written as a block matrix
	\[
	\hat {\vec A}=\left(\begin{array}{c|c}
		{\vec A}_1 & {\vec A}_2\\\hline
		{\vec A}_2 & {\vec A}_1
	\end{array}\right),
	\]
	where ${\vec A}_1$ is the adjacency matrix of $(V,s^{-1}(1))$ and ${\vec A}_2$ is the adjacency matrix of $(V,s^{-1}(-1))$. The base graphs' adjacency matrix is ${\vec A}={\vec A}_1+{\vec A}_2$. If $G\in\UNN$, then the rows in ${\vec A}$ are unique, so towards a contradiction, assume that $\hat G\notin\UNN$. This means that at least one row in $\hat {\vec A}$, say $i$, is duplicate with a row $j$, except on the respective diagonal positions (we do not count a node as its own neighbor). To exclude the diagonal, let $J=\set{1,2,\ldots,n}\setminus\set{i,j}$, and write $({\vec A}_1)_{i,J}$, resp. $({\vec A}_2)_{j,J}$ to mean all elements in the rows $i$ and $j$ with column indices from $J$. 
	
	First, observe that we can take $1\leq i\leq n$ and $n+1\leq j\leq 2n$, since if $i$ and $j$ were both in ${\vec A}_1$ or both in ${\vec A}_2$, then we would have an identical neighborhood with neighbors in ${\vec A}_1$, but also an identical neighborhood with entries from ${\vec A}_2$. The union of the two would therefore also be the same in the base graph already, contradicting the assumption that $G\in\UNN$. The following figure illustrates the argument, where we write $(\vec A)_{i,\bullet}$ to denote the $i$-th entire row of a matrix ${\vec A}$:
	
	\begin{figure}[h!]
		\centering
		\includegraphics[scale=0.66]{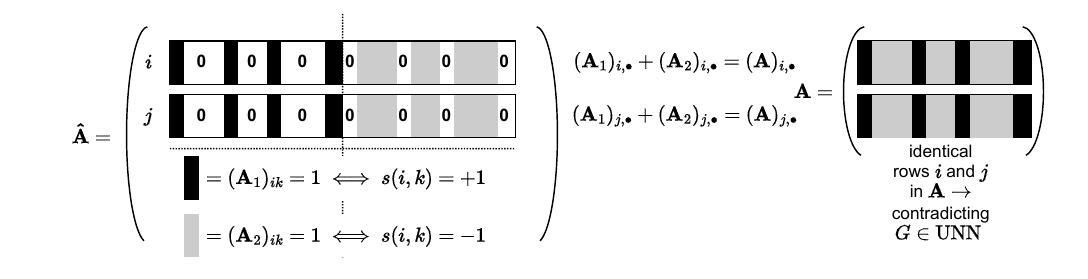}
	\end{figure}
	
	Therefore, we can assume $i$ as a row in ${\vec A}_1$ and $j$ as a row in ${\vec A}_2$. Since the rows have identical entries in the columns in $J$, the first half of row $j$ must also appear as the second half in row $i$ (by definition of $\hat {\vec A}$). Again, since $\hat G\notin\UNN$, the second half of row $i$ in $\hat {\vec A}$ must therefore also be the second half in row $j$ in $\hat {\vec A}$. Now, consider any index $k\in J$, for which $({\vec A}_1)_{ik}=1\iff s(i,k)=+1$. Since ${\vec A}_2$ has also a 1-entry in row $j$ column $k$, this means $s(j,k)=1$. But this further means that $({\vec A}_2)_{i,k}=1$, since the left half of row $j$ appears as right half of row $i$, implying that $s(i,k)=-1$ by definition of ${\vec A}_2$; a contradiction. The figure below illustrates the argument similar as before, highlighting the position of the $k$-th column in light gray:
	
	\begin{figure}[h!]
		\centering
		\includegraphics[scale=0.66]{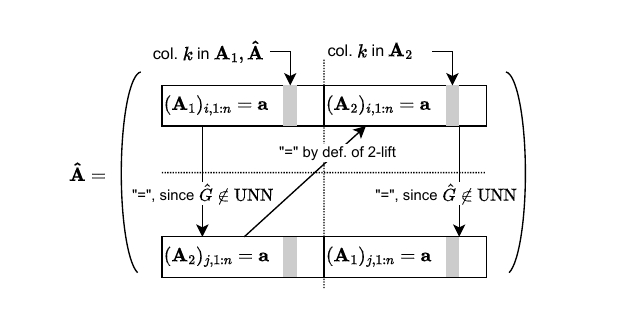}
	\end{figure}

	This means that the 2-lift of a base graph $G\in\UNN$ will produce a graph $\hat G\in\UNN$, and completes the proof.
\end{proof}

\begin{theorem}
Let $\mathcal{G}(N,\eps,d)$ be an expander-family. Then, the statements $G\in\mathcal{G}$ and $G\in\ac{UNN}$ are logically independent.
\end{theorem}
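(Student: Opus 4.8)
The plan is to prove the logical independence in the standard manner, by exhibiting two witnessing expander-families: a family $\mathcal G_1$ all of whose members are \acp{UNN}, and a family $\mathcal G_2$ none of whose members are \acp{UNN}. The first shows that ``$G\notin\UNN$'' does not follow from $G$ belonging to an expander-family, the second shows that ``$G\in\UNN$'' does not follow either, and together these are exactly the claimed independence (so that, just as with the Cheeger constant, whether $G$ is a \ac{UNN} is neither provable nor refutable from $G$ being an expander). Both $\mathcal G_1$ and $\mathcal G_2$ will consist of graphs of infinitely many sizes, of bounded degree, and with Cheeger constant bounded below by a positive constant, as an expander-family $\mathcal G(N,\eps,d)$ requires.

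For $\mathcal G_1$ I would feed the $2$-lift machinery of \cite{bilu_lifts_2006,marcus_interlacing_2015} into Lemma~\ref{lem:2-lift-unn}. Fix any $d\geq 3$ and take the complete graph $K_{d+1}$ as base graph: it is $d$-regular, it is a \ac{UNN} by \cite[Lemma~5]{rassTellMeWho2024} (as already used in Lemma~\ref{lem:large-cheeger-unn}), and it is Ramanujan because all of its non-Perron eigenvalues equal $-1$ and $1\leq 2\sqrt{d-1}$. By \cite{marcus_interlacing_2015} (which sharpens \cite{bilu_lifts_2006}), every $d$-regular graph admits a signing whose $2$-lift is again $d$-regular and all of whose new eigenvalues are $\leq 2\sqrt{d-1}$. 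Iterating this starting from $K_{d+1}$ produces $d$-regular graphs $G^{(0)}=K_{d+1},G^{(1)},G^{(2)},\dots$ of the infinitely many sizes $(d+1)\cdot 2^{k}$, whose second-largest adjacency eigenvalue stays $\leq 2\sqrt{d-1}<d$; by Cheeger's inequality their Cheeger constants are then bounded below by the positive constant $(1-2\sqrt{d-1}/d)/2$, so $\mathcal G_1=\{G^{(k)}\}_{k\geq 0}$ is an expander-family. Finally, an induction on $k$ using Lemma~\ref{lem:2-lift-unn} shows that every $G^{(k)}$ is a \ac{UNN}.

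For $\mathcal G_2$, take any $d$-regular expander-family $\{H_n\}$ (for example $\mathcal G_1$) and replace each $H_n$ by its \emph{twin-doubling} $G_n$: replace every vertex $v$ by two non-adjacent copies $v_1,v_2$, and for every edge $\{u,v\}$ of $H_n$ insert the four edges $\{u_1,v_1\},\{u_1,v_2\},\{u_2,v_1\},\{u_2,v_2\}$. Then $G_n$ is $2d$-regular, and the copies $v_1,v_2$ satisfy $\nb(v_1)=\nb(v_2)=\set{u_1,u_2:\{u,v\}\in E(H_n)}$, so $G_n\notin\UNN$. Its adjacency matrix is the Kronecker product $\vec A(H_n)\otimes\vec J$ with $\vec J$ the $2\times 2$ all-ones matrix, so the spectrum of $\vec A(G_n)$ consists of the numbers $2\mu$ for $\mu$ an eigenvalue of $\vec A(H_n)$, together with $n$ copies of $0$; in particular every eigenvalue of $\vec A(G_n)$ other than the Perron value $2d$ is at most $\max\set{2\lambda_2(\vec A(H_n)),0}$, which is bounded away from $2d$ uniformly in $n$ because $\{H_n\}$ is an expander-family. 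Since $G_n$ is connected and $2d$-regular, Cheeger's inequality then gives a constant positive lower bound on $h(G_n)$, so $\mathcal G_2=\{G_n\}$ is an expander-family, no member of which is a \ac{UNN}.

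The only part that needs genuine care is checking that twin-doubling does not destroy expansion. The Kronecker factor $\vec J$ has eigenvalues $2$ and $0$, so the construction injects into the spectrum of $G_n$ a new eigenvalue $0$ of multiplicity $n$, and one must rule out that this collapses the spectral gap. It does not: the value $0$ sits strictly below the Perron value $2d$, and the quantity that governs the lower bound in Cheeger's inequality is the second-largest eigenvalue $\max\set{2\lambda_2(\vec A(H_n)),0}$, which inherits from $H_n$ the property of being bounded away from the top. The remaining ingredients---that $2$-lifts of $d$-regular graphs are $d$-regular (and that the lifts supplied by \cite{marcus_interlacing_2015} are connected, so the Perron eigenvalue stays simple), that $K_{d+1}$ is simultaneously a \ac{UNN} and Ramanujan, and that $G_n$ is connected whenever $H_n$ is---are routine and I would merely record them.
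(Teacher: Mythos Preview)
Your proof is correct. For the first family $\mathcal G_1$ (expanders that are all \acp{UNN}) you follow exactly the paper's route: start from a \ac{UNN} base graph, iterate $2$-lifts via \cite{bilu_lifts_2006,marcus_interlacing_2015}, and invoke Lemma~\ref{lem:2-lift-unn} inductively to preserve the \ac{UNN} property; you are simply more explicit about the base graph $K_{d+1}$ and the resulting Cheeger bound.

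For the second family $\mathcal G_2$ (expanders that are never \acp{UNN}) you take a genuinely different path. The paper works \emph{locally}: given an $(n,\eps,d/2)$-expander, it picks two vertices $x,y$ and adds edges from $x$ to $\nb(y)$ and from $y$ to $\nb(x)$, forcing the new neighborhoods of $x$ and $y$ to coincide; expansion is preserved because adding edges can only increase each $|\partial S|$, and the maximum degree at most doubles. Your twin-doubling is \emph{global and spectral}: the Kronecker product $\vec A(H_n)\otimes\vec J$ makes the duplicated neighborhoods manifest at every vertex and lets you read off the spectrum exactly, so the Cheeger lower bound comes from Cheeger's inequality rather than from a direct edge-counting argument. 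The paper's modification is more elementary (no spectral computation) and perturbs an arbitrary expander by only a bounded number of edges; your construction is cleaner in that it yields a regular graph with a fully explicit spectrum and destroys the \ac{UNN} property uniformly across all vertices rather than at a single pair.
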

\begin{proof}
We construct expander-families that have, or do not have, the \ac{UNN} property, starting with expander families whose families are all \acp{UNN}, and later modifying them to remain expanders, but losing the \ac{UNN} property. 

Since Lemma \ref{lem:2-lift-unn}'s assertion about the preservation of the \ac{UNN} property upon 2-lifts made no particular assumptions on the underlying signing, it will work for all choices of $s$. Hence, the known constructions of expander families as in \cite{bilu_lifts_2006} remain doable, and we directly get an expander family composed of all \acp{UNN}.

To construct an expander that is no longer a \ac{UNN}, consider an $(n,\eps,d/2)$-expander $G$. If $G\notin\UNN$ already, then we are finished. Otherwise, choose any two distinct nodes $x,y$ in $G$, and add edges from $x$ to $\nb(y)$ and from $y$ to $\nb(x)$. Call the resulting graph $G'$. By construction, it is not a \ac{UNN}, but still an expander: The Cheeger constant cannot decrease upon this change, since any subset of nodes will have at least as many, if not more, outgoing edges after this change, making $h(G')\geq h(G)\geq\eps$. Moreover, the maximum degree in $G'$ is no more than twice the degree as in $G$, since every two nodes in $G$ have at most $d/2$ neighbors, and even if the neighborhoods were disjoint (not just distinct), adding another $d/2$ many edges to both of them is enough to make the neighborhoods non-unique. This gives a $(n,\eps,d)$-expander $G'\notin\UNN$, and can be repeated for all sizes $n$, while keeping $\eps$ and $d$ constant.

\end{proof}

\begin{acronym}
	\acro{UNN}{Unique-Neighborhood Network}%
	\acro{ISP}{Internet Service Provider}%
	\acro{MPT}{Multipath Transmission}%
	\acro{MPA}{Multipath Authentication}%
	\acro{MAC}{Message Authentication Code}%
	\acro{IoT}{Internet-of-Things}%
\end{acronym}

\bibliographystyle{plain}

\end{document}